\documentclass[12pt,reqno]{amsart}
\usepackage[utf8]{inputenc}
\usepackage{amssymb,amsfonts,amsmath,amsthm}
\usepackage{mathtools}
\usepackage{indentfirst}
\usepackage{bigints}
\usepackage[letterpaper, margin=1.15 in, footskip=.4in]{geometry}
\usepackage{xcolor}
\usepackage{nccmath}
\usepackage{geometry}

\makeatletter
\def\specialsection{\@startsection{section}{1}
  \z@{\linespacing\@plus\linespacing}{.5\linespacing}
  {\normalfont}}
\def\section{\@startsection{section}{1}
  \z@{.7\linespacing\@plus\linespacing}{.5\linespacing}
  {\normalfont\scshape\bfseries}}
\makeatother

\usepackage{fullpage}
\usepackage{fancyhdr}

\usepackage{stmaryrd}
\usepackage{mathrsfs}
\usepackage{hyperref}
\usepackage{lineno}

\usepackage{tikz}
\usetikzlibrary{calc}
\usepackage{caption}
\usepackage{lmodern}

 \newtheorem{theorem}{Theorem}[section]
 \newtheorem{lemma}[theorem]{Lemma}
 \newtheorem{corollary}[theorem]{Corollary}
 \newtheorem{prop}[theorem]{Proposition}
 \newtheorem*{remark}{Remark}

\setlength{\parindent}{25pt}

 \let\originalleft\left
\let\originalright\right
\renewcommand{\left}{\mathopen{}\mathclose\bgroup\originalleft}
\renewcommand{\right}{\aftergroup\egroup\originalright}

\newcommand{\Mod}[1]{\ (\mathrm{mod}\ #1)}

\title{Partitions into sums of two squares}
\author{Jaime Palacios}

\newcommand{\RN}[1]{%
  \textup{\uppercase\expandafter{\romannumeral#1}}%
  }

\begin{document}

\begin{abstract}
     We use a variation of the Circle Method, along with the Saddle Point Method, to obtain an asymptotic formula for the number of partitions of a number $n$ into integers which are sums of two squares. Unlike previous work on partitions into restricted parts, we need to handle a Dirichlet series with a fractional singularity.
\end{abstract}

\maketitle

\section{Introduction}

 A \textit{partition} of a number $n$ is a non-increasing sequence of positive integers whose sum equals $n$, and the count   of such is denoted by the \textit{partition function} $p(n)$. 
The study of asymptotics of the partition function started over a century ago with a result of Hardy and Ramanujan from 1918 \cite{hardy} in which they showed
 \begin{equation*}
     p(n)\sim \frac{1}{4n\sqrt{3}}\exp\Big(\pi \sqrt{\frac{2}{3}}n^{\frac{1}{2}}\Big),
 \end{equation*}
 as $n\to \infty$. Their method was soon after adjusted by Hardy and Littlewood for the Waring's problem, being now referred to as the Hardy-Ramanujan-Littlewood Circle Method.

 One may also consider the partitions of $n$ whose parts all lie in a subset $\mathcal{A}$ of the integers, denoting by $p_\mathcal{A}(n)$ the \textit{restricted partition function}. Denote by $\mathcal{S}$ the set of sums of two squares, and its restricted partition function by $p_\mathcal{S}(n)$. The main result in the present paper (Theorem \ref{main}) is an asymptotic formula for $p_\mathcal{S}(n)$. This is given in terms of quite complicated auxiliary functions. However, it can be simplified to the following asymptotic estimate:
 
 \begin{theorem}\label{1}
 The number of partitions of $n$ with all parts lying in the set $\mathcal{S}$ of sums of two squares satisfy
    \begin{equation*}
        p_\mathcal{S}(n)\sim 2^{-9/8}\Big(\frac{K}{3}\Big)^{1/4}n^{-\frac{3}{4}}(\log n)^{-\frac{1}{8}} \exp\Bigg(2^{3/4}\pi\sqrt{\frac{K}{3}}n^{1/2}(\log n)^{-1/4}\Big(1+o(1)\Big)\Bigg),
    \end{equation*}
    as $n\to \infty$, and where $K$ is the Landau-Ramanujan constant (see Theorem \ref{Landau} for the definition of such constant).
\end{theorem}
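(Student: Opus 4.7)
The plan is to work with the generating function
\begin{equation*}
 F(x)=\prod_{s\in\mathcal{S}}\frac{1}{1-x^s},\qquad p_\mathcal{S}(n)=\frac{1}{2\pi i}\oint \frac{F(x)}{x^{n+1}}\,dx,
\end{equation*}
and extract the asymptotics by combining a Mellin analysis of $\log F(e^{-t})$ with a circle/saddle-point argument. First I would take logarithms and expand, writing
\begin{equation*}
 \log F(e^{-t})=\sum_{s\in\mathcal{S}}\sum_{k\ge1}\frac{e^{-skt}}{k}=\frac{1}{2\pi i}\int_{(c)}\Gamma(w)\,\zeta(w+1)\,D(w)\,t^{-w}\,dw,
\end{equation*}
where $c>1$ and $D(w)=\sum_{s\in\mathcal{S}}s^{-w}$ is the Dirichlet series associated with indicator function of the set of sums of two squares.

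The next step is to describe $D(w)$ near its dominant singularity. By the Landau--Ramanujan theorem together with Selberg--Delange--type analysis, $D(w)$ admits an analytic continuation with a branch point at $w=1$ of the shape
\begin{equation*}
 D(w)=\frac{K\sqrt{\pi}}{\sqrt{w-1}}+H(w),
\end{equation*}
with $H$ holomorphic in a neighbourhood. Shifting the contour past $w=1$ along a Hankel loop around the branch cut and using
\begin{equation*}
 \frac{1}{2\pi i}\int_{\text{Hankel}} \frac{t^{-w}}{(w-1)^{1/2}}\,dw=\frac{t^{-1}}{\sqrt{\pi\log(1/t)}},
\end{equation*}
one obtains the leading behaviour
\begin{equation*}
 \log F(e^{-t})\sim\frac{K\pi^2}{6\,t\sqrt{\log(1/t)}}\qquad(t\to 0^+),
\end{equation*}
together with controlled error terms and similar expressions for the derivatives $(\log F(e^{-t}))^{(j)}$, which I would need for the saddle-point expansion.

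Having this expansion in hand, I would then apply the saddle-point method to
\begin{equation*}
 p_\mathcal{S}(n)=\frac{1}{2\pi i}\oint F(e^{-\alpha})\,e^{n\alpha}\,d\alpha
\end{equation*}
on a circle $|x|=e^{-\alpha_n}$. The saddle equation $n=-\tfrac{d}{d\alpha}\log F(e^{-\alpha})$ has, to leading order, the solution
\begin{equation*}
 \alpha_n\sim \frac{\pi\sqrt{K}\,2^{1/4}}{\sqrt{6}}\,n^{-1/2}(\log n)^{-1/4},
\end{equation*}
and substituting this back into $n\alpha_n+\log F(e^{-\alpha_n})$ yields the exponent $2^{3/4}\pi\sqrt{K/3}\,n^{1/2}(\log n)^{-1/4}(1+o(1))$, matching the theorem. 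The pre-exponential factor $2^{-9/8}(K/3)^{1/4}n^{-3/4}(\log n)^{-1/8}$ then comes from the Gaussian fluctuation $1/\sqrt{2\pi\,(\log F(e^{-\alpha}))''|_{\alpha=\alpha_n}}$ after plugging the asymptotic size of the second derivative.

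The main technical obstacle I anticipate is twofold. First, the presence of the \emph{fractional} singularity in $D(w)$ at $w=1$ — rather than a simple pole, as in Meinardus' theorem — requires tracking the $\sqrt{\log(1/t)}$ factors uniformly across the saddle-point expansion, including in the error terms. Second, I must carry out the circle-method bookkeeping: show that the integral over the "minor arc" (the part of the contour with $|\arg(e^{-\alpha}/|x|)|$ not small) is exponentially smaller than the major-arc contribution. For ordinary partitions this is done by exploiting modular-type transformations, but here one instead has to bound $|F(e^{-\alpha-i\theta})|$ directly from the Mellin representation, using that $\mathcal{S}$ is sufficiently equidistributed in residue classes so that cancellation in $\sum_{s\in\mathcal{S}}e^{-s(\alpha+i\theta)}$ gives a gain over the real-axis value when $|\theta|$ is not too small. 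This uniform bound on the minor arc, compatible with the slower-than-polynomial growth of $\log F$, is where the proof is most delicate.
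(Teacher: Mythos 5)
Your analytic core coincides with the paper's: the Mellin representation of $\log F(e^{-t})$ via $\Gamma(w)\zeta(w+1)D(w)$, the square-root branch point of $D(w)$ at $w=1$ with coefficient $K\sqrt{\pi}$, the Hankel-contour evaluation giving $\log F(e^{-t})\sim K\pi^{2}/(6t\sqrt{\log(1/t)})$, and the saddle-point/Gaussian step; your saddle location, exponent, and prefactor all agree with Proposition \ref{p} and Theorem \ref{main}. One small correction: $D(w)-K\sqrt{\pi}(w-1)^{-1/2}$ is \emph{not} holomorphic at $w=1$ (half-integer powers of $w-1$ remain); write instead $D(w)=g(w)(w-1)^{-1/2}$ with $g$ analytic near $w=1$ and $g(1)=K\sqrt{\pi}$, as the paper does in Lemma \ref{deriv}, which costs only an admissible error.

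The genuine gap is your treatment of the arcs away from $\alpha=0$. You propose to show the minor-arc contribution is exponentially small by proving cancellation in $\sum_{s\in\mathcal{S}}e^{-s(\alpha+i\theta)}$ using equidistribution of $\mathcal{S}$ in residue classes, and you acknowledge this is the delicate point without giving the argument. As it stands this is the hardest step of your plan and it is left unsubstantiated: minor-arc estimates for exponential sums weighted by the indicator of sums of two squares are a substantial project in their own right, and near rationals $a/q$ with small $q$ (the nonprincipal major arcs) the relevant input is not cancellation but the fact that enough of $\mathcal{S}$ avoids the progression $q\mathbb{N}$. Moreover, exponential smallness is more than is needed: the main term is only a fixed power of $n$ below $\rho^{-n}\Psi(\rho)$, so a saving $\ll_{N}X^{-N}$ for arbitrary fixed $N$ suffices. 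The paper obtains exactly this (Lemma \ref{minor and major}), following \cite{dt}, with no exponential-sum estimates at all: from the Euler product,
\begin{equation*}
\frac{|\Psi(\rho e(\alpha))|}{\Psi(\rho)}=\prod_{\ell\in\mathcal{S}}\Bigg(1+\frac{4\rho^{\ell}\sin^{2}(\pi\ell\alpha)}{(1-\rho^{\ell})^{2}}\Bigg)^{-1/2},
\end{equation*}
so it suffices to exhibit, for each $\alpha$, enough $\ell\in\mathcal{S}$ with $\|\ell\alpha\|$ bounded below; this follows from Dirichlet approximation together with the Landau lower bound $|\mathcal{S}\cap[R,2R]|\gg R/\sqrt{\log R}$ (Theorem \ref{Landau}) and the infinitude of $\mathcal{S}\setminus q\mathbb{N}$. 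Either import this argument or actually supply the exponential-sum bounds; without one of the two, your error-term estimate, and hence the theorem, is not proved.
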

 
The Circle Method starts by expressing the partition function as an integral applying Cauchy's integral formula to the generating series. This integral is typically estimated by breaking down the interval of integration into two types of subintervals: the \textit{major arcs}, composed of intervals close to a rational number with small denominator, which constitute the main term of the integral; and the \textit{minor arcs}, which are the complement of the major arcs and which make up the error term. This decomposition is used in other contexts where an integral of an exponential appears.

Our proof follows an elegant variation of the Circle Method devised by Vaughan \cite{vaughan}, which makes use of the Saddle Point Method, for obtaining the asymptotic formula for the partitions with all parts being prime numbers. This method has also been used for enumerating the restricted partitions for other sets such as  prime powers (Gafni \cite{gafni2}), semiprimes (Das, Robles, Zaharescu, and Zeindler \cite{robles}), perfect powers (Gafni \cite{gafni1}), powers in residue classes (Berndt, Malik, and Zaharescu \cite{malik}), and polynomial values (Dunn and Robles \cite{dunn}).  All of these, our case of sums of two squares included, have their main term coming only from the major arc around zero, which we will call \textit{principal} following Gafni's \cite{gafni2} terminology, and the rest of the major arcs, called consequently \textit{nonprincipal}, contribute to the error term just as the minor arcs. 

Another result within the same framework is due to Debruyne and Tenenbaum \cite{dt}. They deal with some class of restricted partition functions whose Dirichlet series can be meromorphically continued into the left half complex plane.
The perfect power, power in residue class, and polynomial value cases satisfy this condition and follow consequently. However, the prime, prime power, and semiprime cases do not: their Dirichlet series contain logarithmic singularities. 

For the sums of two squares a singularity is present as well, which prevents its Dirichlet series to be meromorphically continued into the left half complex plane. This time it is a fractional singularity, making things a bit trickier. Thus we must deal with the Dirichlet series by means different to 
the ones used in previous work containing singularities. We also exploit an advantage given in \cite{dt}, which spares us from having to estimate the exponential sums for the nonprincipal major arcs nor for the minor arcs one encounters when working with the Circle Method. As a result, we obtain a simpler and shorter proof.

Our paper's layout is as follows. In Section \ref{gs} we write our restricted partition function as an integral of an exponential using the generating series. Section \ref{results} presents our results. We then continue with estimates for $\Big(\rho \frac{d}{d\rho}\Big)^m \Phi(\rho)$ in Section \ref{operator}, where the Dirichlet series with fractional singularity appears. In Section \ref{error} we estimate the contribution coming from what can be regarded as the nonprincipal major arcs and the minor arcs.
We conclude the proof of our main theorem in Section \ref{proofmain}, along with the proof of the difference function in Section \ref{diff}. Finally, Section \ref{appendix} is an appendix with some well known results we refer to and use throughout the paper for the convenience of the reader.

\section{Generating Series} \label{gs}

Let $\mathcal{S}:= \{\ell \in \mathbb{N}: \ell=x^2+y^2, x,y\in\mathbb{N}\}$ and denote by $p_\mathcal{S}(n)$ the functions that counts the number of partitions of $n$ whose parts are elements of the set $\mathcal{S}$. 
We start with its generating series,
\begin{equation*}
    \Psi(z)=\sum_{n=1}^\infty p_\mathcal{S}(n) z^n = \prod_{\ell \in \mathcal{S}} \frac{1}{1-z^\ell}. 
\end{equation*}

It will be more convenient for us to write it as
\begin{equation*}
    \Psi(z)=\exp(\Phi(z)),
\end{equation*}
where
\begin{equation*}
    \Phi(z)=\sum_{j=1}^\infty \sum_{\ell \in \mathcal{S}}\frac{1}{j} z^{j\ell}. 
\end{equation*}
Expressing the generating series as an exponential will make sense when we make use of the Saddle Point Method.

Suppose  $0<\rho<1$, then by Cauchy's integral formula we have
\begin{equation}\label{Cauchy}
    p_\mathcal{S}(n)=\rho^{-n}\int_{-1/2}^{1/2} \Psi(\rho e(\alpha)) e(-n\alpha) d\alpha = \rho^{-n} \int_{-1/2}^{1/2} \exp(\Phi(\rho e(\alpha))) e(-n\alpha) d\alpha,
\end{equation}
for any positive real number $\rho<1$, and where $e(\alpha)=e^{2\pi i \alpha}$ as usual. 

Let $x\in \mathbb{R}$ be large (eventually we will set $x=n$). We choose $\rho=\rho(x)$ so that
\begin{equation}\label{def}
    x=\rho \Phi'(\rho).
\end{equation}
The point $\rho$ corresponds to what is called the \textit{saddle point}, which will be important when the Saddle Point Method makes its presence. And it will follow from Lemma \ref{deriv} that the relationship between $x$ and $\rho$ is well defined and injective, and that $\rho \to 1^-$ as $x \to \infty$.

\section{Asymptotic Formula} \label{results}
We now state our main theorem: the asymptotic formula for the partition function $p_\mathcal{S}(n)$.

\begin{theorem}\label{main}
Let $\rho=\rho(n)$, $\Psi(\rho)$ and $\Phi(\rho)$ be define as above, then as $n\to \infty$ our partition function is
    \begin{equation*}
        p_\mathcal{S}(n)=\frac{\rho^{-n}\Psi(\rho)}{\sqrt{2 \pi \Big\{\Big(\rho\frac{d}{d\rho}\Big)^2 \Phi(\rho)\Big\}}}\Big(1+O(n^{-\frac{1}{5}})\Big).
    \end{equation*}
\end{theorem}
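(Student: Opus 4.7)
My plan is to apply the saddle point method to the integral representation \eqref{Cauchy}, with $\rho$ determined by the saddle point condition \eqref{def}. I expect the integrand to concentrate near $\alpha=0$, so I split $[-1/2,1/2]$ into a principal major arc $|\alpha|\le\delta$, nonprincipal major arcs around rationals $a/q$ of small denominator, and the complementary minor arcs, for a threshold $\delta=\delta(n)$ to be optimized later. The nonprincipal and minor arc contributions will be absorbed into the $O(n^{-1/5})$ error using the bounds announced for Section \ref{error}; the substance of the argument lies on the principal arc.

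On the principal arc I would Taylor expand $\Phi(\rho e(\alpha))$ at $\alpha=0$. Differentiating the double series defining $\Phi$ term by term gives
\begin{equation*}
\frac{d^{m}}{d\alpha^{m}}\Phi(\rho e(\alpha))\Big|_{\alpha=0}=(2\pi i)^{m}\Big(\rho\frac{d}{d\rho}\Big)^{m}\Phi(\rho).
\end{equation*}
Because $\rho\Phi'(\rho)=n$, the linear term in $\alpha$ cancels exactly against $e(-n\alpha)$, leaving
\begin{equation*}
\exp(\Phi(\rho e(\alpha)))\,e(-n\alpha)=\Psi(\rho)\exp\Big(-2\pi^{2}\alpha^{2}\Big(\rho\frac{d}{d\rho}\Big)^{2}\Phi(\rho)+R(\alpha)\Big),
\end{equation*}
where $R(\alpha)$ is controlled by $|\alpha|^{3}(\rho\,d/d\rho)^{3}\Phi(\rho)$ together with higher-order tails. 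Using the growth rates for $(\rho\,d/d\rho)^{m}\Phi(\rho)$ supplied by Section \ref{operator}, I would choose $\delta$ small enough that $R(\alpha)=o(1)$ uniformly on the principal arc yet large enough that the Gaussian tails
\begin{equation*}
\int_{|\alpha|>\delta}\exp\Big(-2\pi^{2}\alpha^{2}\Big(\rho\frac{d}{d\rho}\Big)^{2}\Phi(\rho)\Big)d\alpha
\end{equation*}
are of smaller order than $n^{-1/5}$ times the full Gaussian integral. Evaluating the resulting Gaussian on $[-\delta,\delta]$ contributes $\frac{1+O(n^{-1/5})}{\sqrt{2\pi(\rho\,d/d\rho)^{2}\Phi(\rho)}}$, and multiplying by the prefactor $\rho^{-n}\Psi(\rho)$ reproduces the claimed main term.

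The step I expect to be the main obstacle is not the principal arc itself, which is fairly mechanical once the right derivative estimates are available, but rather the control of $(\rho\,d/d\rho)^{m}\Phi(\rho)$ that feeds into it. The Dirichlet series $\sum_{\ell\in\mathcal{S}}\ell^{-s}$ has a fractional singularity at $s=1$ of Landau-Ramanujan type, so $(\rho\,d/d\rho)^{m}\Phi(\rho)$ cannot be computed by shifting a meromorphic contour as in the Debruyne-Tenenbaum framework; it requires Mellin inversion past a branch cut, which is precisely the content of Section \ref{operator}. Once those asymptotics are in hand, the interplay among the cubic Taylor error on the principal arc, the Gaussian tails outside $[-\delta,\delta]$, and the off-origin estimates of Section \ref{error} determines the final exponent $n^{-1/5}$ through a direct optimization in $\delta$.
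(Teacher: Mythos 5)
Your proposal follows essentially the same route as the paper: restrict to a principal arc via the bounds of Section \ref{error}, Taylor expand to second order so that the saddle-point condition $\rho\Phi'(\rho)=n$ cancels the linear term, and evaluate the resulting Gaussian, with the cubic remainder and Gaussian tails balanced by the choice of arc width (the paper takes $|\alpha|\le X^{-22/15}$, equivalent to your optimization of $\delta$) to produce the $O(n^{-1/5})$ error. The only cosmetic difference is that the paper works with $T(s)=\Phi(e^{-s})$ and expands in $s=1/X-2\pi i\alpha$ rather than expanding $\Phi(\rho e(\alpha))$ directly in $\alpha$, which is the same computation.
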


The following proposition will allow us to get an asymptotic estimate in terms of $n$ for our main theorem, namely Theorem \ref{1}.

\begin{prop}\label{p}
As $x\to \infty$, we have
    \begin{equation} \label{p1}
        x \log \frac{1}{\rho(x)}=\pi\sqrt{\frac{K}{3}} x^{1/2}(2\log x)^{-1/4}\Big(1-\frac{1}{8}\frac{\log \log x}{\log x}+O\big(\frac{1}{\log x}\big)\Big),
    \end{equation}
   
    \begin{equation}\label{p2}
        \Phi\big(\rho(x)\big)=\pi\sqrt{\frac{K}{3}} x^{1/2}(2\log x)^{-1/4}\Big(1-\frac{1}{8}\frac{\log \log x}{\log x}+O\big(\frac{1}{\log x}\big)\Big),
    \end{equation}
    
    and for $m\geq 1$
    
    \begin{equation}\label{p3}
        \Big(\rho \frac{d}{d\rho}\Big)^m \Phi(\rho(x))=x^{\frac{m+1}{2}}\Bigg(\frac{3\sqrt{2 \log x}}{K\pi^2}\Bigg)^{\frac{m-1}{2}}\Gamma(m+1) \Bigg(1+O\Big(\frac{\log \log x}{\log x}\Big)\Bigg).
    \end{equation}
\end{prop}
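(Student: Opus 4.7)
The plan is to first obtain an asymptotic expansion for $(\rho \frac{d}{d\rho})^m \Phi(\rho)$ as $\rho \to 1^-$, and then invert the saddle point relation $x = \rho \Phi'(\rho)$ to re-express everything in terms of $x$. Setting $t := \log(1/\rho) \to 0^+$, so that $\rho \frac{d}{d\rho} = -\frac{d}{dt}$, I would apply the Mellin inversion formula $e^{-u} = \frac{1}{2\pi i}\int_{(c)}\Gamma(s)u^{-s}ds$ and interchange to get, for $c>1$,
\begin{equation*}
    \Big(\rho \frac{d}{d\rho}\Big)^m \Phi(\rho) = \frac{1}{2\pi i}\int_{(c)} \Gamma(s+m)\,\zeta(s+1)\, B(s)\, t^{-s-m}\, ds,
\end{equation*}
where $B(s) := \sum_{\ell \in \mathcal{S}} \ell^{-s}$.

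The next step uses Landau's theorem (which is essentially Theorem \ref{Landau}), giving that $(s-1)^{1/2} B(s)$ extends holomorphically to a neighborhood of $s=1$ with value $K\sqrt{\pi}$ at $s=1$. I would deform the contour past $s=1$, wrapping it around a Hankel-type keyhole along a branch cut emanating leftward from $s=1$. The key integral
\begin{equation*}
    \frac{1}{2\pi i}\int_H (s-1)^{-1/2}\, t^{-s}\, ds = \frac{t^{-1}}{\sqrt{\pi \log(1/t)}},
\end{equation*}
computed by the substitution $v = (s-1)\log(1/t)$ and the Hankel representation of $1/\Gamma(1/2)$, then produces
\begin{equation*}
    \Big(\rho \frac{d}{d\rho}\Big)^m \Phi(\rho) = \frac{\pi^2 K\, m!}{6\, t^{m+1}\sqrt{\log(1/t)}}\Big(1 + O\Big(\tfrac{1}{\log(1/t)}\Big)\Big), \qquad m \geq 0,
\end{equation*}
where the error absorbs both the subleading behavior of $(s-1)^{1/2} B(s) - K\sqrt{\pi}$ (each extra factor of $s-1$ yields a factor of $1/\log(1/t)$ after the keyhole calculation) and the tail of the shifted contour.

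Finally, applying the $m=1$ case to the saddle point equation $x = \rho \Phi'(\rho)$ gives the implicit relation $t^2 \sqrt{\log(1/t)} \sim \pi^2 K/(6x)$. Taking logarithms and iterating once yields $\log(1/t) = \tfrac{1}{2}\log x + \tfrac{1}{4}\log\log x + O(1)$, whence
\begin{equation*}
    t = \pi\sqrt{\tfrac{K}{6}}\, \frac{2^{1/4}}{\sqrt{x}\,(\log x)^{1/4}}\Big(1 - \tfrac{1}{8}\tfrac{\log\log x}{\log x} + O\big(\tfrac{1}{\log x}\big)\Big).
\end{equation*}
Multiplying by $x$ yields (p1); substituting this expansion of $t(x)$ back into the formula for $(\rho \frac{d}{d\rho})^m \Phi(\rho)$ produces (p2) (the $m=0$ instance) and (p3) after simplification of the resulting powers of $K$, $\pi$, and $\log x$.

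The main obstacle is precisely the fractional singularity: unlike the poles or logarithmic singularities handled in the prime-power, semiprime, and related analogues, a square-root branch point forces the use of the keyhole contour and requires enough holomorphic control of $(s-1)^{1/2}B(s)$ to justify the $O(1/\log(1/t))$ error term. Once this bound is in place, the $\log\log x/\log x$ correction in the final formulas arises transparently from inverting the implicit equation defining $t(x)$.
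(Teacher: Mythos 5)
Your proposal is correct and follows essentially the same route as the paper: the Mellin-inversion/Hankel-contour treatment of the square-root singularity at $s=1$ is exactly the paper's Lemma \ref{deriv}, and your inversion of $x=\rho\Phi'(\rho)$ via $\log(1/t)=\tfrac12\log x+\tfrac14\log\log x+O(1)$ is the paper's own proof of the proposition. The constants and the $-\tfrac18\frac{\log\log x}{\log x}$ correction all check out against \eqref{p1}--\eqref{p3}.
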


\begin{proof}
    Suppose $x$ is sufficiently large. Then $\rho$ is close to $1$ and so $X$ is large. From the definition of $\rho$ given by equation (\ref{def}), Lemma \ref{deriv}, and  $\zeta(2)=\frac{\pi^2}{6}$, we see that
    \begin{equation}\label{x}
        x=\rho\Phi'(\rho)=\frac{K\pi^2X^2}{6(\log X)^{1/2}}\Big(1+O\Big(\frac{1}{\log X}\Big)\Big).
    \end{equation}
    Thus $$\log x=2\log X-\frac{1}{2}\log \log X+O(1).$$
    Whence $\log x\ll \log X\ll \log x$, so it must be  that $\log \log X=\log \log x+O(1)$. Then,
    $$\log X=\frac{1}{2}\log x+\frac{1}{4}\log \log x +O(1).$$
    Putting this into equation (\ref{x}) and solving for $X$ we obtain
    \begin{equation}\label{X}
        X=\pi^{-1}\sqrt{\frac{3}{K}}x^{1/2}(2\log x)^{1/4}\Big(1+\frac{1}{8}\frac{\log \log x}{\log x}+O\big(\frac{1}{\log x}\big)\Big),
    \end{equation}
    which implies (\ref{p1}) upon noticing that $ x\log \frac{1}{\rho}=xX^{-1}$.

    Now, combining (\ref{p1}) with Lemma \ref{deriv} we get

    \begin{equation*}
        \Big(\rho \frac{d}{d\rho}\Big)^m \Phi(\rho)=xX^{m-1}\Gamma(m+1)\Big(1+O\Big(\frac{1}{\log X}\Big)\Big).
    \end{equation*}
    Plugging equation (\ref{X}) into this gives (\ref{p2}) when $m=0$ and (\ref{p3}) when $m\geq 1$.

\end{proof}

We may now give the asymptotic estimate of Theorem \ref{main}.

\begin{proof}[Proof of Theorem \ref{1} given Theorem \ref{main}. ]
    By Proposition \ref{p} we have
    \begin{align}
    \label{phi}
    \begin{split}
        \rho^{-n}\Psi(\rho)&=\exp\big(n \log \frac{1}{\rho(n)}+\Phi(\rho(n))\big)\\
        &= \exp\Bigg(2^{3/4}\pi\sqrt{\frac{K}{3}}n^{1/2}(\log n)^{-1/4}\big(1+o(1)\big)\Bigg)
    \end{split}
    \end{align}
    and
    \begin{align*}
        \sqrt{ \Big\{\Big(\rho\frac{d}{d\rho}\Big)^2 \Phi(\rho)\Big\}}&=\Bigg(n^{3/2}\Big(\frac{3\sqrt{2\log n}}{K\pi^2}\Big)^{1/2}\Gamma(3)\Big(1+O\Big(\frac{\log \log n}{n}\Big)\Big)\Bigg)^{1/2}\\
        &= K^{-1/4}2^{5/8}3^{1/4}\pi^{-1/2}n^{3/4}(\log n)^{1/8}\big(1+o(1)\big).
    \end{align*}
    Therefore, combining these with Theorem \ref{main} gives the desired result.
\end{proof}

The method used for asymptotic formula in Theorem \ref{main} can also be used for the estimate of the difference function of $p_{\mathcal{S}}(n)$, provided in the following theorem.
\begin{theorem}\label{growth}
Using the notation defined above with $\rho=\rho(n)$, we have
    \begin{equation*}
        p_\mathcal{S}(n+1)-p_\mathcal{S}(n)=\frac{\rho^{-n}\log(\frac{1}{\rho})\Psi(\rho)}{\sqrt{2 \pi  \Big\{\Big(\rho\frac{d}{d\rho}\Big)^2 \Phi(\rho)\Big\}}}\Big(1+O(n^{-\frac{1}{5}})\Big),
    \end{equation*}
    as $n \to \infty$.
\end{theorem}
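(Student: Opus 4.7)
The natural plan is to derive an integral representation of $p_\mathcal{S}(n+1)-p_\mathcal{S}(n)$ analogous to the one underlying Theorem \ref{main}, and then to run the major/minor arc analysis of that proof almost verbatim. Using Cauchy's formula (\ref{Cauchy}) with the single saddle point $\rho=\rho(n)$ for both terms yields
$$p_\mathcal{S}(n+1)-p_\mathcal{S}(n) = \rho^{-n}\int_{-1/2}^{1/2}\Psi(\rho e(\alpha))\,e(-n\alpha)\bigl(\rho^{-1}e(-\alpha)-1\bigr)\,d\alpha,$$
so the only novelty is tracking the extra factor $\rho^{-1}e(-\alpha)-1$ through the argument.

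I would use the same decomposition of $[-1/2,1/2]$ into the principal major arc around $\alpha=0$, the nonprincipal major arcs, and the minor arcs that is employed in the proof of Theorem \ref{main}. On the nonprincipal major arcs and on the minor arcs the factor $\rho^{-1}e(-\alpha)-1$ is uniformly $O(1)$ as $\rho\to 1^-$, so the bounds established for those arcs in the proof of Theorem \ref{main} transfer (after multiplication by this bounded factor) to contributions that are still dominated by the new target main term, which by Proposition \ref{p} is only a factor $\log(1/\rho)\sim n^{-1/2}(\log n)^{-1/4}$ smaller than the main term of Theorem \ref{main}.

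On the principal arc I would split $\rho^{-1}e(-\alpha)-1 = (\rho^{-1}-1) + \rho^{-1}(e(-\alpha)-1)$ and handle the two summands separately. The constant summand $\rho^{-1}-1$ pulls outside the integral and multiplies precisely the integral analyzed for Theorem \ref{main}, producing $(\rho^{-1}-1)\cdot \rho^{-n}\Psi(\rho)/\sqrt{2\pi(\rho d/d\rho)^2\Phi(\rho)}\cdot(1+O(n^{-1/5}))$; the elementary identity $\rho^{-1}-1=\log(1/\rho)\bigl(1+O(\log(1/\rho))\bigr)$ then converts the prefactor to $\log(1/\rho)$ with relative error well inside $O(n^{-1/5})$. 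The remaining summand $\rho^{-1}(e(-\alpha)-1)=-2\pi i\alpha/\rho + O(\alpha^2)$ is odd to leading order in $\alpha$; paired against the symmetric Gaussian $e^{-2\pi^2\alpha^2(\rho d/d\rho)^2\Phi(\rho)}$ that governs the saddle-point approximation, it integrates to zero at leading order, and its surviving contributions arise from pairing this linear factor with the cubic Taylor term $(2\pi i\alpha)^3/6\cdot(\rho d/d\rho)^3\Phi(\rho)$ in the exponent. Using Proposition \ref{p} together with the fact that the principal arc has length $\ll ((\rho d/d\rho)^2\Phi(\rho))^{-1/2}(\log n)^{O(1)}$, one checks these corrections are of smaller order than the claimed relative error.

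The main technical obstacle is exactly this bookkeeping on the principal arc: verifying that the odd-in-$\alpha$ piece $\rho^{-1}(e(-\alpha)-1)$, once paired with the non-Gaussian corrections to the saddle-point exponent, produces a relative error no worse than that of Theorem \ref{main}. Apart from this, the argument is a direct imitation of the proof of Theorem \ref{main}, which is why the two statements share the same error exponent $1/5$.
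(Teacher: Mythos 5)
Your proposal is correct and has the same skeleton as the paper's proof: the same integral representation from (\ref{Cauchy}) with the extra factor $\rho^{-1}e(-\alpha)-1$, reuse of the arc analysis from the proof of Theorem \ref{main} away from $\alpha=0$ (where the uniform bound $|\rho^{-1}e(-\alpha)-1|=O(1)$ is harmless against the arbitrary power saving of Lemma \ref{minor and major}), and extraction of the main term from the principal arc together with $\rho^{-1}-1=\log(\tfrac{1}{\rho})\bigl(1+O(\log\tfrac{1}{\rho})\bigr)$. The only divergence is the step you single out as the main technical obstacle: you split off $\rho^{-1}\bigl(e(-\alpha)-1\bigr)$, invoke its oddness against the symmetric Gaussian, and estimate the surviving pairing with the cubic Taylor term. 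Those estimates do check out, but no cancellation argument is needed at all: on the principal arc $|\alpha|\le\eta$ with $\eta$ of size $X^{-3/2+o(1)}$ (the paper takes $\eta=X^{-3/2}(\log X)^{7/4}$; even the cutoff $X^{-22/15}$ from Theorem \ref{main} suffices), the trivial bound $\rho^{-1}e(-\alpha)-1=\tfrac{1}{X}+O(\eta)$ already has relative error $O(\eta X)=O(X^{-1/2+o(1)})$ (resp.\ $O(X^{-7/15})$), comfortably inside $O(n^{-1/5})$ since $X\asymp n^{1/2}(\log n)^{1/4}$. This is exactly how the paper argues, multiplying the asymptotic for $\int_{-\eta}^{\eta}\rho^{-n}\exp\bigl(\Phi(\rho e(\alpha))-2\pi in\alpha\bigr)\,d\alpha$ from Theorem \ref{main} by $\tfrac{1}{X}+O(\eta)$; your route is valid but does strictly more work than necessary on that step.
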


From Theorem \ref{growth} and Proposition \ref{p} one may also deduce an asymptotic estimate for the growth of our restricted partition function, which takes the following form.

\begin{corollary}
Let $\rho(n)$ be defined as above. Then
    \begin{equation*}
        p_\mathcal{S}(n+1)-p_\mathcal{S}(n)\sim \pi\sqrt{\frac{K}{3}}n^{-1/2}(2\log n)^{-1/4}p_S(n),
    \end{equation*}
    as $n\to \infty$.
\end{corollary}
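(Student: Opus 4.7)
The plan is to divide Theorem \ref{growth} by Theorem \ref{main} to isolate the factor $\log(1/\rho)$, and then apply part (\ref{p1}) of Proposition \ref{p} to obtain an explicit asymptotic for this factor in terms of $n$.

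More precisely, Theorem \ref{main} yields
\begin{equation*}
    p_\mathcal{S}(n)=\frac{\rho^{-n}\Psi(\rho)}{\sqrt{2\pi\{(\rho\,d/d\rho)^2\Phi(\rho)\}}}\bigl(1+O(n^{-1/5})\bigr),
\end{equation*}
while Theorem \ref{growth} has exactly the same quotient multiplied by an extra factor of $\log(1/\rho)$. First I would divide these two asymptotics; since the denominators match identically and both error factors are $1+O(n^{-1/5})$, their quotient (with the assumption $p_\mathcal{S}(n)>0$, which is immediate) is
\begin{equation*}
    \frac{p_\mathcal{S}(n+1)-p_\mathcal{S}(n)}{p_\mathcal{S}(n)}=\log\!\Bigl(\tfrac{1}{\rho(n)}\Bigr)\bigl(1+O(n^{-1/5})\bigr).
\end{equation*}

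Next I would invoke (\ref{p1}) with $x=n$. Dividing through by $n$ gives
\begin{equation*}
    \log\!\Bigl(\tfrac{1}{\rho(n)}\Bigr)=\pi\sqrt{\tfrac{K}{3}}\,n^{-1/2}(2\log n)^{-1/4}\Bigl(1+O\bigl(\tfrac{\log\log n}{\log n}\bigr)\Bigr),
\end{equation*}
which in particular is $\pi\sqrt{K/3}\,n^{-1/2}(2\log n)^{-1/4}(1+o(1))$. Substituting this into the preceding identity and observing that the resulting error term $O(\log\log n/\log n)$ dominates $O(n^{-1/5})$, I obtain the claimed asymptotic
\begin{equation*}
    p_\mathcal{S}(n+1)-p_\mathcal{S}(n)\sim\pi\sqrt{\tfrac{K}{3}}\,n^{-1/2}(2\log n)^{-1/4}\,p_\mathcal{S}(n).
\end{equation*}

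There is no genuine obstacle here: the whole point of having derived both Theorem \ref{main} and Theorem \ref{growth} with matching saddle-point denominators is that the corollary reduces to a one-line ratio together with the already-proved asymptotic for $\log(1/\rho(n))$. The only thing to watch is merging the error terms correctly, which is routine since both are $o(1)$.
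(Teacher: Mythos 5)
Your proposal is correct and follows essentially the same route as the paper: take the ratio of Theorem \ref{growth} to Theorem \ref{main} to isolate $\log(1/\rho)$, then substitute the asymptotic for $\log(1/\rho(n))$ obtained from equation (\ref{p1}) of Proposition \ref{p} with $x=n$. The error-term bookkeeping you describe matches the paper's argument, so there is nothing further to add.
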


\begin{proof}
    From Theorems \ref{main} and \ref{growth} we may see that
    \begin{equation*}
        p_\mathcal{S}(n+1)-p_\mathcal{S}(n)=p_\mathcal{S}(n)\log\Big(\frac{1}{\rho}\Big)  \Big(1+O(n^{-1/5})\Big).
    \end{equation*}
    And from Proposition \ref{p} we get that
    \begin{equation*}
        \log\Big(\frac{1}{\rho}\Big)=\pi\sqrt{\frac{K}{3}} n^{-1/2}(2\log n)^{-1/4}\Big(1-\frac{1}{8}\frac{\log \log n}{\log n}+O\big(\frac{1}{\log n}\big)\Big).
    \end{equation*}
    Combining these two equations gives us the corollary.
\end{proof}

\section{Estimates for \texorpdfstring{$\Big(\rho \frac{d}{d\rho}\Big)^m \Phi(\rho)$}{a}} \label{operator}

Now we obtain estimates for $\Big(\rho \frac{d}{d\rho}\Big)^m \Phi(\rho)$, which will correspond to the derivatives of a composition of functions whose Taylor expansion will be used later. This will become clear when we resort to the Saddle Point Method. 

The proof of the lemma below follows the spirit of that of Landau's well known result about the number of sums of two squares  (Theorem \ref{Landau}).

\begin{lemma}\label{deriv} Let $\rho=e^{-1/X}$, then
    \begin{equation} 
        \Big(\rho \frac{d}{d\rho}\Big)^m \Phi(\rho)= \frac{K \zeta(2) \Gamma(m+1)X^{m+1}}{\sqrt{\log X}}\Bigg(1+O\Big(\frac{1}{\log X}\Big)\Bigg),
    \end{equation}
    as $\rho \to 1^-$.
\end{lemma}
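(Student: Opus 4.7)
The plan is to rewrite $\bigl(\rho\,d/d\rho\bigr)^m\Phi(\rho)$ as a Mellin--Barnes integral involving the Dirichlet series
$$F(u):=\sum_{\ell\in\mathcal S}\ell^{-u},$$
and to extract the main term by wrapping a Hankel contour around the square-root branch point of $F$ at $u=1$---in the spirit of Landau's original treatment of $\#\{n\le x:n\in\mathcal S\}$.

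First I differentiate termwise: since $\Phi(\rho)=\sum_{j\ge 1}\sum_{\ell\in\mathcal S}\rho^{j\ell}/j$, each application of $\rho\,d/d\rho$ brings down a factor $j\ell$, so with $\rho=e^{-1/X}$,
$$\Big(\rho\frac{d}{d\rho}\Big)^m\Phi(\rho)=\sum_{j\ge 1}j^{m-1}\sum_{\ell\in\mathcal S}\ell^m e^{-j\ell/X}.$$
Inserting the Mellin representation $e^{-t}=\frac{1}{2\pi i}\int_{(c)}\Gamma(s)t^{-s}\,ds$, interchanging (valid for $c>m+1$), and substituting $u=s-m$ gives
$$\Big(\rho\frac{d}{d\rho}\Big)^m\Phi(\rho)=\frac{X^m}{2\pi i}\int_{(c')}\Gamma(u+m)\,\zeta(u+1)\,F(u)\,X^u\,du,\qquad c'>1.$$

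The analytic input I need is that the multiplicativity of $\mathbf 1_{\mathcal S}$ lets me factor via Euler products
$$F(u)=\bigl(\zeta(u)L(u,\chi_{-4})\bigr)^{1/2}H(u),$$
with $\chi_{-4}$ the nontrivial character modulo $4$ and $H$ holomorphic for $\mathrm{Re}(u)>1/2$; using $L(1,\chi_{-4})=\pi/4$ and matching constants with the Landau--Ramanujan constant of Theorem \ref{Landau} gives $F(u)=\frac{K\sqrt{\pi}}{\sqrt{u-1}}\bigl(1+O(u-1)\bigr)$ near $u=1$. I then deform $(c')$ into a Hankel contour $H$ wrapping the branch cut from $u=1$, closed off by a vertical segment at $\mathrm{Re}(u)=1-\delta$. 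Exponential decay of $\Gamma(u+m)$ on vertical strips and standard polynomial bounds for $\zeta,L,F$ in that half-strip make the vertical segment contribute $O(X^{1-\delta})$, absorbed in the error.

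The main term comes from the Hankel loop. Freezing $\Gamma(u+m)\zeta(u+1)\cdot K\sqrt{\pi}$ at $u=1$, i.e.\ at $\Gamma(m+1)\zeta(2)\cdot K\sqrt{\pi}$, and applying the classical identity
$$\frac{1}{2\pi i}\oint_{H}\frac{X^u}{\sqrt{u-1}}\,du=\frac{X}{\sqrt{\pi\log X}}$$
yields exactly $K\zeta(2)\Gamma(m+1)X^{m+1}/\sqrt{\log X}$; the next Taylor coefficient contributes a Hankel integral of $(u-1)^{1/2}X^u$, which evaluates to $X/(\log X)^{3/2}$ and so produces the relative error $O(1/\log X)$. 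The main obstacle is exactly the fractional nature of the singularity: residue calculus is unavailable, so all of the main term has to come from the Hankel loop, and this demands enough quantitative control on $F$ just to the left of $\mathrm{Re}(u)=1$ both to justify the deformation and to separate the leading $(u-1)^{-1/2}$ behaviour from the subleading $(u-1)^{1/2}$ correction with the correct implicit constant.
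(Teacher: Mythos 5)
Your overall strategy is the same as the paper's: termwise differentiation, a Mellin--Barnes representation with the Dirichlet series $F(u)=\sum_{\ell\in\mathcal S}\ell^{-u}=f(u)\sqrt{\zeta(u)L(u,\chi)}$, the local expansion $F(u)=K\sqrt{\pi}(u-1)^{-1/2}(1+O(u-1))$ (your constant $K\sqrt{\pi}$ is correct), and a Hankel loop around $u=1$ evaluated by the classical formula, with the $(u-1)^{1/2}$ correction giving the relative error $O(1/\log X)$. The gap is in the contour deformation. You propose to pull the line of integration to a \emph{fixed} abscissa $\operatorname{Re}(u)=1-\delta$, closing with a vertical segment there and estimating it by ``standard polynomial bounds for $\zeta$, $L$, $F$''. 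Unconditionally this step fails: $F(u)$ is a square root of $\zeta(u)L(u,\chi)$, so every zero of $\zeta$ or $L(\cdot,\chi)$ is a branch point of $F$, and we cannot rule out such zeros in the strip $1-\delta<\operatorname{Re}(u)<1$ at large height for any fixed $\delta>0$. Thus $F$ is not known to continue holomorphically (let alone with polynomial bounds) to the region you sweep through, the deformation is not justified, and the claimed power-saving $O(X^{1-\delta})$ for the left segment is out of reach without something like a quasi-Riemann hypothesis.

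The unconditional repair, which is what the paper does, is to first truncate the $u$-integral at height $T$ using the exponential decay of $\Gamma(u+m)$ (error $O(X^{m+1+\varepsilon}e^{-T})$), and only then deform, keeping the contour inside the classical zero-free region up to height $T$: the horizontal reach is $c\asymp 1/\log T$ (not a fixed $\delta$), with a small Hankel notch of radius $1/\log X$ around $u=1$. Choosing $T=\exp(\sqrt{\log X})$ makes the shifted-line contribution $O\big(X^{m+1-c}\log T\big)=O\big(X^{m+1}e^{-c'\sqrt{\log X}}\big)$, which is absorbed by the true error $O\big(X^{m+1}/(\log X)^{3/2}\big)$; note the final relative error is $O(1/\log X)$, not a power of $X$, precisely because the contour can only move $\asymp 1/\sqrt{\log X}$ to the left. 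Your main-term computation on the Hankel loop, including freezing $\Gamma(u+m)\zeta(u+1)$ at $u=1$ and the $(u-1)^{1/2}$ term contributing $\ll X/(\log X)^{3/2}$, then goes through exactly as in the paper (with a harmless truncation error in Hankel's formula).
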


\begin{proof}
We have that
\begin{align*}
    \Big(\rho \frac{d}{d\rho}\Big)^m\Phi(\rho)&=\sum_{j=1}^\infty \sum_{\ell\in \mathcal{S}} \frac{1}{j} (j\ell)^m \rho^{j\ell}\\
    & =\sum_{j=1}^\infty \sum_{\ell\in \mathcal{S}} \frac{1}{j} (j\ell)^m e^{-j\ell/X}.
\end{align*}
   Using an inverse Mellin transform and interchanging the sums with the integrals, we obtain
\begin{equation*}
     \Big(\rho \frac{d}{d\rho}\Big)^m\Phi(\rho) = \frac{1}{2\pi i} \int\limits_{c_0-i\infty}^{c_0+i\infty} X^s\zeta(s-m+1)\Gamma(s)\Bigg(\sum_{\ell \in \mathcal{S}} \frac{1}{\ell^{s-m}}\Bigg) ds,
\end{equation*}
for some $c_0>m+1$.

      Theorem \ref{tst} implies that 
    \begin{equation*}
        \sum_{\ell\in \mathcal{S}} \frac{1}{\ell^s}= f(s)\sqrt{\zeta(s)L(s,\chi)},
    \end{equation*}
    where $\chi$ is the non-principal Dirichlet character modulo 4, and 
    \begin{equation*}
        f(s)=(1-2^{-s})^{-1/2}\prod_{p \equiv 3 \Mod 4} (1-p^{-2s})^{-1/2},
    \end{equation*}
which converges for $\Re( s)>\frac{1}{2}$.

Defining the  function $$g(s):=f(s)\sqrt{(s-1)\zeta(s)L(s,\chi_1)},$$  we can then write for some $c_0>m+1$ 
\begin{align*}
    \Big(\rho \frac{d}{d\rho}\Big)^m\Phi(\rho) & =  \frac{1}{2\pi i} \int\limits_{c_0-iT}^{c_0+iT} X^s\zeta(s-m+1)\Gamma(s) \frac{g(s-m)}{\sqrt{s-m-1}} ds+O\Big(\frac{X^{m+1+\varepsilon}}{e^T}\Big)  \quad  \\ 
    & =  \frac{1}{2\pi i} \int\limits_{\mathcal{C}} X^s\zeta(s-m+1)\Gamma(s) \frac{g(s-m)}{\sqrt{s-m-1}} ds+O\Big(X^{m+1-c}\log T\Big)  \\
    & =  \frac{1}{2\pi i} \int\limits_{\mathcal{C}} X^s\zeta(s-m+1)\Gamma(s) \frac{g(s-m)}{\sqrt{s-m-1}} ds+O\Bigg(\frac{X^{m+1}}{(\log X)^{3/2}}\Bigg), \\
\end{align*}
where $\mathcal{C}$ is the contour running from $m+1-c-i\delta$ along a straight line to $m+1-i\delta$, then along the semicircle $m+1+\delta e^{i\theta}$, $-\frac{\pi}{2}\leq \theta \leq \frac{\pi}{2}$, and finally along a straight line to $m+1-c+i\delta$. Here we have taken  $c= \frac{c'}{\log T}$ for some constant $c'>0$ so that the contour $\mathcal{C}$ is inside the zero free region for $\zeta(s-m)$ up to height $T=\exp(\sqrt{\log X})$, and we take $\delta= \frac{1}{\log X}$. 

Since $g(s-m)$ has a removable fractional singularity in a neighbourhood of $m+1$, we see that

\begin{align*}
         \frac{1}{2\pi i} \int\limits_{\mathcal{C}} & X^s\zeta(s-m+1)\Gamma(s) \frac{g(s-m)}{\sqrt{s-m-1}} ds \\
         &=  \frac{1}{2\pi i} \int\limits_{\mathcal{C}} X^s\zeta(s-m+1)\Gamma(s) \frac{1}{\sqrt{s-m-1}}\Big(g(1)+O(|s-(m+1)|)\Big) ds\\
    &=  \frac{1}{2\pi i} \int\limits_{\mathcal{C}} X^s\zeta(s-m+1)\Gamma(s) \frac{1}{\sqrt{s-m-1}}\Big(K\sqrt{\pi}+O(|s-(m+1)|)\Big) ds.
\end{align*}
The choice of $\delta=\frac{1}{\log X}$ implies that 

\begin{align*}
   \int\limits_{\mathcal{C}} X^s\zeta(s-m+1)\Gamma(s) \frac{1}{\sqrt{s-m-1}}|s-(m+1)| ds& \ll \int\limits_{\mathcal{C}} X^{\Re s} |s-(m+1)|^{\frac{1}{2}} |ds|\\
   & \ll \frac{X^{m+1}}{(\log X)^{3/2}}.
\end{align*}

Now we note that after a change of variable,
\begin{align*}
    \frac{1}{2\pi i} \int\limits_{\mathcal{C}} X^s\zeta(s-m+1)\Gamma(s) \frac{1}{\sqrt{s-m-1}} ds &= \frac{1}{2\pi i} \int\limits_{\mathcal{C'}} X^{m+1+u}\zeta(2+u)\Gamma(m+1+u) \frac{du}{\sqrt{u}} \\
    & =  \frac{1}{2\pi i} \int\limits_{\mathcal{C'}} X^{m+1+u}\Big(\zeta(2)\Gamma(m+1)+O(u) \Big) \frac{du}{\sqrt{u}},
\end{align*}
where $\mathcal{C'}$ is the contour running from $-c-i\delta$ along a straight line to $-i\delta$, then along the semicircle $\delta e^{i\theta}$, $-\frac{\pi}{2}\leq \theta \leq \frac{\pi}{2}$, and finally along a straight line to $-c+i\delta$. Here we have that 
\begin{align*}
    \frac{1}{2\pi i} \int\limits_{\mathcal{C'}}X^u|u|^{1/2}du&\ll \frac{1}{\sqrt{\log X}} \int\limits_{\mathcal{C'}} |du| \ll \frac{1}{(\log X)^{3/2}},
\end{align*}
and by Theorem \ref{H},
\begin{align*}
    \frac{1}{2\pi i} \int\limits_{\mathcal{C'}}X^u\frac{du}{\sqrt{u}} &=\frac{1}{2\pi i} \int\limits_{\mathcal{H}}X^u \frac{du}{\sqrt{u}}  +O\Big(X^{-c}\Big)\\
    & = \frac{1}{\sqrt{\pi \log X}} +O\Big(\frac{1}{(\log X)^{3/2}}\Big).
\end{align*}
Putting all above together the lemma follows.

\end{proof}

\section{Error Term}\label{error}

Next we bound the contribution of the error term, modelling after Lemma 3.2 in \cite{dt}.

\begin{lemma} \label{minor and major} For $N>1$ we have
    \begin{equation*}
         \bigintsss\limits_{X^{-22/15}\leq |\alpha| \leq \frac{1}{2}} \Psi(\rho e(\alpha)) e(-n\alpha) d\alpha \ll_N X^{-N} \Psi(\rho).
    \end{equation*}
\end{lemma}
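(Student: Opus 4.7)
My plan is to reduce the estimate to a pointwise lower bound on the integrand. Since $|\Psi(\rho e(\alpha))| = \exp(\Re\Phi(\rho e(\alpha)))$, set
\[
D(\alpha) := \Phi(\rho) - \Re\Phi(\rho e(\alpha)) \geq 0.
\]
If I can show $D(\alpha) \geq (N+1)\log X$ uniformly on $X^{-22/15} \leq |\alpha| \leq 1/2$, then $|\Psi(\rho e(\alpha))| \leq X^{-N-1}\Psi(\rho)$ pointwise, and integration over a set of measure at most $1$ immediately yields the stated bound. So the entire proof reduces to producing this uniform lower bound on $D$.

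The key identity is
\[
D(\alpha) = 2 \sum_{j \geq 1} \sum_{\ell \in \mathcal{S}} \frac{\rho^{j\ell}}{j}\sin^2(\pi j\ell\alpha),
\]
a series of non-negative terms, to which I would apply different lower bounds in different subranges. For $|\alpha| \leq 1/(4X)$, I would truncate to $j\ell \leq X$, apply $\sin^2(\pi j\ell\alpha) \geq 4(j\ell\alpha)^2$, and recognise the result as the matching truncation of $(\rho\, d/d\rho)^2 \Phi(\rho)$. Since the saddle-point weight $\rho^{j\ell}$ puts essentially all the mass of that operator in $j\ell \ll X\log X$, Lemma \ref{deriv} yields
\[
D(\alpha) \gg \alpha^2 X^3 / \sqrt{\log X}.
\]
At the endpoint $|\alpha| = X^{-22/15}$ this is $\gg X^{1/15}/\sqrt{\log X}$, which dwarfs $(N+1)\log X$, and the bound only improves as $|\alpha|$ grows in this subrange. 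For $1/(4X) < |\alpha| \ll 1/(\log X)^2$, the same Taylor inequality applied with the shorter truncation $j\ell \leq 1/(4|\alpha|)$, combined with the observation that the $m$-th coefficient of $\Phi$ is at least $1$ whenever $m \in \mathcal{S}$ (so that the count of sums-of-two-squares in $[L/2,L]$ with $L = 1/(4|\alpha|)$ powers the sum), gives $D(\alpha) \gg 1/(|\alpha|\sqrt{\log(1/|\alpha|)})$, still comfortably larger than $\log X$.

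The main obstacle is the residual regime $1/(\log X)^2 \leq |\alpha| \leq 1/2$, where Taylor expansion at the origin is useless. Here I would switch to the logarithmic form
\[
D(\alpha) = \frac{1}{2}\sum_{\ell \in \mathcal{S}} \log\Big(1 + \frac{4\rho^\ell \sin^2(\pi\ell\alpha)}{(1-\rho^\ell)^2}\Big),
\]
and observe that for each $\ell \in \mathcal{S}\cap[1,X]$ whose multiple $\ell\alpha$ lies at distance at least $1/4$ from $\mathbb{Z}$, the summand is $\gg 1$ (because $\rho^\ell \asymp 1$ and $(1-\rho^\ell)^2 \leq (\ell/X)^2 \leq 1$ on this range). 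Counting such $\ell$ is the delicate step, and this is precisely the place where the advantage of \cite{dt} enters: rather than invoking cancellation in an exponential sum over $\mathcal{S}$, I would control the count via the density of sums of two squares in the residue classes modulo the denominator of a nearby rational approximation to $\alpha$, together with elementary equidistribution arguments away from such rationals. This gives $D(\alpha) \gg X/\sqrt{\log X}$ throughout the residual regime, which easily beats $(N+1)\log X$ and completes the proof.
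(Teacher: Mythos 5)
Your reduction to the pointwise bound $D(\alpha)\ge (N+1)\log X$ is sound, and your treatment of the two ranges $X^{-22/15}\le|\alpha|\le \tfrac{1}{4X}$ and $\tfrac{1}{4X}<|\alpha|\ll(\log X)^{-2}$ works (in the first range the appeal to ``essentially all the mass in $j\ell\ll X\log X$'' does not by itself show the truncation to $j\ell\le X$ retains a constant proportion, but this is cosmetic: keep only $j=1$ and $\ell\in\mathcal{S}\cap[X/2,X]$ and count by Theorem \ref{Landau}). The genuine gap is the residual regime $(\log X)^{-2}\le|\alpha|\le\tfrac12$, which is the entire content of the lemma --- it contains all the nonprincipal major arcs and the bulk of the minor arcs --- and there your argument is only a declaration of intent. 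The assertion $D(\alpha)\gg X/\sqrt{\log X}$ uniformly on this range amounts to showing that a positive proportion of $\ell\in\mathcal{S}\cap[1,X]$ satisfy $\|\ell\alpha\|\ge\tfrac14$ for \emph{every} such $\alpha$. For $\alpha$ not close to a rational with bounded denominator this is an equidistribution statement for the sparse, irregular set $\mathcal{S}$ in Bohr-type sets, i.e.\ precisely the exponential-sum-over-$\mathcal{S}$ input that the Debruyne--Tenenbaum framework \cite{dt} is designed to avoid; ``elementary equidistribution arguments away from such rationals'' does not supply it. Even in the near-rational case your threshold $\tfrac14$ forces you to know that $\mathcal{S}$ has positive relative density in suitable residue classes modulo $q$ (sums of two squares in arithmetic progressions), a further nontrivial input left unproved.

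The paper gets by with far less, and this is the missing device. Writing $\alpha=\tfrac aq\pm r$ with $(a,q)=1$, $q\le 3R_0$, $r\le\tfrac{1}{3qR_0}$ (Dirichlet), it handles the range $\tfrac{2}{3qX}\le r$ by restricting the Euler product to $\ell\in[\tfrac{1}{3rq},\tfrac{2}{3rq}]$: for \emph{every integer} $\ell$ in that interval one has $\tfrac{1}{3q}\le|\ell r|\le\tfrac{2}{3q}$, hence $\|\ell\alpha\|\ge\tfrac{1}{3q}$ automatically, so no distributional information about $\mathcal{S}$ beyond Landau's count of $\mathcal{S}$ in an interval is needed; and on the nonprincipal major arcs ($r\le\tfrac{2}{3qX}$, $q\ge2$) it simply fixes $N$ elements of $\mathcal{S}\setminus q\mathbb{N}$, each contributing a factor $\ll X^{-1}$, because only $X^{-N}$ --- not exponential decay, and not $D\gg X/\sqrt{\log X}$ --- is required. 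Since your own target is the equally weak bound $D\ge(N+1)\log X$, your outline can be repaired by importing exactly this interval/selection trick; but as written the decisive counting step in the main regime is absent, so the proof is incomplete.
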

\begin{proof} 
We first bound the contribution from $\frac{1}{X}\leq|\alpha|\leq \frac{1}{2}$ by bounding the integrand. By symmetry we may restrict to the interval $[\frac{1}{X}, \frac{1}{2}]$.

Observe that Lemma \ref{Landau} implies that for sufficiently large $R_0$ fixed $$|\mathcal{S}\cap [R,2R]|\gg \frac{R}{\sqrt{\log R}},$$ for $R$ for $R>R_0$.

By Dirichlet's approximation theorem we may write $\alpha \in [\frac{1}{X}, \frac{1}{2}]$  as $\alpha= \frac{a}{q}\pm r$ with $(a,q)=1$, $1\leq q\leq 3R_0$ and $0\leq r \leq \frac{1}{3qR_0}$.
We first look at what can be seen as the \textit{minor arcs}: $\alpha \in [\frac{1}{X}, \frac{1}{2}] $ with $\frac{2}{3qX} \leq r \leq \frac{1}{3qR_0}$. 
Noticing that  
    
    \begin{equation*}
        |1-\rho^{\ell} e(\ell\alpha)|^2=(1-\rho^{\ell})^2+2\rho^{\ell}(1-\cos(2\pi \ell\alpha)),
    \end{equation*}
we obtain, using $\|\theta\|$ as the distance from $\theta$ to the set of integers,
\begin{align*}
   \frac{|\Psi(\rho e(\alpha))|}{\Psi(\rho)}& =  \prod_{ \ell\in \mathcal{S}} \Bigg(1+\frac{4\rho^\ell\sin^2(\pi \ell \alpha)}{(1-\rho^\ell)^2}\Bigg)^{-\frac{1}{2}} \leq    \prod_{ \ell \in \mathcal{S}} \Bigg(1+\frac{16\rho^\ell\|\ell\alpha\|^2}{(1-\rho^\ell)^2}\Bigg)^{-\frac{1}{2}}\\
   & \leq \prod_{\substack{\ell \leq \frac{1}{\log \frac{1}{\rho}}\\ \ell \in \mathcal{S}}} \Bigg(1+\frac{5\|\ell\alpha\|^2}{(1-\rho^\ell)^2}\Bigg)^{-\frac{1}{2}} \leq   \prod_{\substack{\frac{1}{3rq}\leq \ell \leq \frac{2}{3rq}\\ \ell \in \mathcal{S}}} \Bigg(1+\frac{5\|\ell\alpha\|^2X^2}{\ell^2}\Bigg)^{-\frac{1}{2}}.
\end{align*}

For $\frac{1}{3rq}\leq \ell \leq \frac{2}{3rq}$, we have that $\frac{1}{3q}\leq |\ell\alpha - \ell\frac{a}{q}| \leq \frac{2}{3q}$ and so $\|\ell\alpha\|\geq \frac{1}{3q}$. Thus

\begin{equation*}
    \frac{|\Psi(\rho e(\alpha))|}{\Psi(\rho)} \leq \prod_{\substack{\frac{1}{3rq}\leq \ell \leq \frac{2}{3rq}\\ \ell \in \mathcal{S}}} \Bigg(1+\frac{5r^2X^2}{4}\Bigg)^{-\frac{1}{2}}.
\end{equation*}
 If $\frac{2}{3qX}\leq r \leq 1/\sqrt{X}$, using Theorem \ref{Landau} we get 

\begin{align*}
     \frac{|\Psi(\rho e(\alpha))|}{\Psi(\rho)} & \leq \prod_{\substack{\frac{1}{3rq}\leq \ell \leq \frac{2}{3rq}\\ \ell \in \mathcal{S}}} \Bigg(1+\frac{5}{ 9q^2}\Bigg)^{-\frac{1}{2}}  \\\
      & \ll  \Bigg(1+\frac{5}{ 9q^2}\Bigg)^{-\frac{c_0}{rq\sqrt{\log rq}}}  \\
      & \ll e^{-\frac{c_1}{\log \frac{1}{\rho}}}\\
       & \ll X^{-N},
\end{align*}
where the constants $c_0$ and $c_1$ depend only on $R_0$. And if $1/\sqrt{X}< r \leq \frac{1}{3qR_0}$, since  Theorem \ref{Landau} implies that $|\mathcal{S}\cap[\frac{1}{3rq},\frac{2}{3rq}]|\geq 2N$ for fixed $R_0$ and sufficiently large $n$, it follows that

\begin{equation*}
    \frac{|\Psi(\rho e(\alpha))|}{\Psi(\rho)} \ll\Bigg(1+\frac{5}{ 4 \log \frac{1}{\rho}}\Bigg)^{N}  \ll X^{-N}.
\end{equation*}

Now we look at what can be regarded as the \textit{nonprincipal major arcs}: $\alpha \in [\frac{1}{X}, \frac{1}{2}]$ with $0\leq r \leq \frac{2}{3qX}$ and $q\geq 2$. 
Let $\{\ell_{q,i}\}_{i=1}^N \subset \mathcal{S}\setminus q\mathbb{N}$ ($2\leq q \leq 3R_0$), which can be chosen due to the infinitude of the set in question. Observe that $\frac{1}{X}\leq \frac{1}{\ell_{q,i}}$ for large $n$, 
so $||\ell_{q,i}\alpha ||\geq \frac{1}{3q}$ for all $\alpha \in [\frac{1}{X}, \frac{1}{2}]$ belonging to the major arcs whose Dirichlet approximation is a fraction with $q$ as denominator. Bounding the Euler products as before, we obtain
\begin{align*}
    \frac{|\Psi(\rho e(\alpha))|}{\Psi(\rho)}  & \leq \prod_{i=1}^N \Bigg(1+\frac{5\|\ell_{q,i}\alpha\|^2X^2}{\ell_{q,i}^2}\Bigg)^{-\frac{1}{2}}\\
    & \leq \prod_{i=1}^N \Bigg(1+\frac{5X^2}{9q^2\ell_{q,i}^2}\Bigg)^{-\frac{1}{2}}\\
    & \ll X^{-N}.
\end{align*}

It only remains to bound the integrand for $X^{-22/15} \leq  \alpha \leq \frac{1}{X}$. We bound the Euler products as before. Noticing that $\|\ell\alpha\|=\ell\alpha $ \ for \ $ \ell<\frac{X}{2}$, and that $|\mathcal{S}\cap[1,\frac{X}{2}]|\gg X/\sqrt{\log X}$ from Theorem \ref{Landau},

\begin{align*}
    \frac{|\Psi(\rho e(\alpha))|}{\Psi(\rho)}& \leq \prod_{ \ell\in \mathcal{S}} \Bigg(1+\frac{9\|\ell\alpha\|^2X^2}{\ell^2}\Bigg)^{-\frac{1}{2}}\\
    & \leq \prod_{\substack{ \ell\leq \frac{1}{2\log \frac{1}{\rho}}\\ \ell \in \mathcal{S}}} \Bigg(1+9 X^{2-44/15}\Bigg)^{-\frac{1}{2}}\\
    & \ll e^{-c_2 X^{2-44/15+1+\varepsilon}}\\
    & \ll_N X^{-N}.
\end{align*}
  Hence the lemma follows.
\end{proof}

\section{Proof of the Main Theorem}\label{proofmain}

We are now ready to proceed with the proof of our main theorem. At last, the Saddle Point Method comes into play: we estimate the integral over what can be regarded as the principal major arc, written in the form of an exponential integral, using the Taylor expansion of $\Phi(\rho)$ around the saddle point $\rho$, whose terms' sizes are given by Lemma \ref{deriv} as anticipated.

\begin{proof}[Proof of Theorem \ref{main}]
We change our notation as suggested in \cite{dt} by defining
\begin{equation*}
    T(s):=\Phi(e^{-s}),
\end{equation*}
to make the proof a bit simpler.

Recalling we set $\rho=e^{-1/X}$ in Lemma \ref{deriv}, we observe $T(1/X)=\Phi(\rho)$, and implied by Lemma \ref{minor and major} 
\begin{equation}\label{newp}
    p_\mathcal{S}(n) = \rho^{-n} \int_{-X^{-22/15}}^{X^{-22/15}} \exp\big(T(1/X-2\pi i \alpha)-2\pi in\alpha\big)  d\alpha+O(\rho^{-n}\Psi(\rho)n^{-B})
\end{equation}
for any constant $B>\frac{1}{2}$, given that $n=x\asymp X^2(\log X)^{-1/2}$ as deduced in Proposition \ref{p}.

Note also that
\begin{equation*}
    T^{(m)}(1/X)= (-1)^m \Big(\rho\frac{d}{d\rho}\Big)^m \Phi(\rho).
\end{equation*}
Hence Lemma \ref{deriv} implies the Taylor expansion of $T(s)$ around $1/X$ takes the form

\begin{equation*}
    T(1/X-2\pi i\alpha)= T(1/X)-2\pi i\alpha T'(1/X)-4\pi^2\alpha^2\frac{T''(1/X)}{2}+O(n^{-1/5})
\end{equation*}
 for $n$ sufficiently large, using that  $n=x\asymp X^2(\log X)^{-1/2}$ once more.

The equation $\rho\Phi'(\rho)=n$ translates to $T'(1/X)=-n$, and the integral in equation (\ref{newp}) can be approximated by a Gaussian integral:

\begin{align*}
    \Psi(\rho) \int_{|\alpha|<X^{-22/15}}& \exp\big(-4\pi^2\alpha^2T''(1/X)/2+O(X^{-2/5})\big) d\alpha\\
    &=\Psi(\rho)\int_{|\alpha|<X^{-22/15}} \exp\big(-4\pi^2\alpha^2T''(1/X)/2\big)\Big(1+O(n^{-1/5})\Big) d\alpha\\
    & = \frac{\Psi(\rho)}{\sqrt{2\pi T''(1/X)}}\Big(1+O(e^{-n^{23/30}})\Big)\Big(1+O(n^{-1/5})\Big).
\end{align*}
Putting this into equation (\ref{newp}) proves the theorem.
\end{proof}

\begin{remark}
    Although one could get an asymptotic expansion  by using more terms of the Taylor expansion of $T(s)$, it will suffice with its first term in our case since the other terms will not make our asymptotic formula more precise for they will be absorbed by the error term in the exponential of $\rho^{-n}\Psi(\rho)$ asymptotically expressed in terms of $n$, that is, of equation (\ref{phi}).
\end{remark}

\section{Proof of the Difference Function}\label{diff}

 Now we obtain the asymptotic formula for the difference function whose proof relies on that of the main theorem.

\begin{proof}[Proof of Theorem \ref{growth}]
    By equation (\ref{Cauchy}) we have that
    \begin{equation*}
        p_\mathcal{S}(n+1)-p_\mathcal{S}(n)= \rho^{-n} \int_{-1/2}^{1/2} \exp\big(\Phi(\rho e(\alpha))-2\pi in\alpha\big) (\rho^{-1}e^{-2\pi i\alpha}-1) d\alpha.
    \end{equation*}
    Note that $|\rho^{-1}e^{-2\pi i\alpha}-1|\leq e^{1/X}+1\leq 4$. From the proof of Theorem \ref{main} we see that the contribution from $|\alpha|>\eta=X^{-3/2}(\log X)^{7/4}$ is
    \begin{equation*}
        \ll \frac{\rho^{-n}\Psi(\rho)}{\sqrt{2\pi \Big\{\Big(\rho\frac{d}{d\rho}\Big)^2 \Phi(\rho)\Big\}}}n^{-B}
    \end{equation*}
    for any positive constant $B>1/2$. On the other hand, when $|\alpha|\leq \eta$ we have 
    \begin{equation*}
        \rho^{-1}e^{-2\pi i\alpha}-1=\exp\Big(\frac{1}{X}-2\pi i\alpha\Big)-1=\frac{1}{X}+O(\eta)=\frac{1}{X}+O\big(X^{-3/2}(\log X)^{7/4}\big).
    \end{equation*}
    From the proof of Theorem \ref{main} we have
    \begin{equation*}
        \int_{-\eta}^\eta \rho^{-n} \exp\big(\Phi(\rho e(\alpha))-2\pi in\alpha\big) d\alpha =\frac{\rho^{-n}\Psi(\rho)}{\sqrt{2 \pi  \Big\{\Big(\rho\frac{d}{d\rho}\Big)^2 \Phi(\rho)\Big\}}}\Big(1+O(n^{-\frac{1}{5}})\Big).
    \end{equation*}
    Recalling equation (\ref{p2}) from Proposition \ref{p}, we have
    
    \begin{align*}
        p_\mathcal{S}(n+1)-p_\mathcal{S}(n) &=\frac{\rho^{-n}\Psi(\rho)}{\sqrt{2 \pi  \Big\{\Big(\rho\frac{d}{d\rho}\Big)^2 \Phi(\rho)\Big\}}}\Big(1+O(n^{-\frac{1}{5}})\Big)\Big(\frac{1}{X}+O\big(X^{-3/2}(\log X)^{7/4}\big)\Big)\\
        &=\frac{\rho^{-n}\log(\frac{1}{\rho})\Psi(\rho)}{\sqrt{2 \pi \Big\{\Big(\rho\frac{d}{d\rho}\Big)^2 \Phi(\rho)\Big\}}}\Big(1+O(n^{-\frac{1}{5}})\Big),
    \end{align*}
    as desired.
\end{proof}

\section*{Acknowledgements}

This work was completed at the University of Mississippi as part of the author’s Ph.D. dissertation research. The author would like to thank his supervisors, Ayla Gafni and Micah Milinovich, for their guidance and support throughout the project.

The author is a Graduate Research Assistant supported by NSF grants DMS-2101912 and DMS-2401461. Part of the work was completed while the author was in residence at UCLA, supported by NSF grant OIA-2229278.

\appendix 
\section{}\label{appendix}

For the convenience of the reader we provide the theorems we have referred to and used throughout  the paper.

\begin{theorem}[Two Square Theorem] \label{tst}
    A positive integer $n$ is the sum of two squares if and only if each of its prime factors $p$ such that $p\equiv 3 \ (\text{mod } 4)$ occurs an even number of times.
\end{theorem}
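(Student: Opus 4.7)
The plan is to prove the Two Square Theorem via the classical three-part strategy: multiplicativity, the Fermat prime case, and a quadratic-residue obstruction. First, for sufficiency I would establish that the set $\mathcal{S}$ of sums of two squares is closed under multiplication via the Brahmagupta--Fibonacci identity
\[
(a^2+b^2)(c^2+d^2)=(ac-bd)^2+(ad+bc)^2.
\]
Given $n$ in which every prime $p \equiv 3 \pmod 4$ appears to an even power, I would write $n = m^2 k$ where $k$ is a product of $2$ and primes $\equiv 1 \pmod 4$; since $2 = 1^2+1^2$ and $m^2 = m^2 + 0^2$ lie in $\mathcal{S}$, multiplicativity then reduces matters to representing each prime $p \equiv 1 \pmod 4$ as a sum of two squares.

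For that key prime case, I would combine two ingredients. First, for $p \equiv 1 \pmod 4$, Euler's criterion (applied to the exponent $(p-1)/2$, which is even) shows that $-1$ is a quadratic residue modulo $p$, so there exists $x$ with $x^2 \equiv -1 \pmod p$. Second, Thue's lemma---a pigeonhole argument on the box $\{0,1,\ldots,\lfloor\sqrt p\rfloor\}^2$ applied to the map $(s,t)\mapsto s-xt \pmod p$---produces integers $u,v$, not both zero, with $|u|,|v| < \sqrt p$ and $u \equiv xv \pmod p$. Then $u^2+v^2 \equiv (x^2+1)v^2 \equiv 0 \pmod p$, and the size bounds force $0 < u^2+v^2 < 2p$, whence $u^2+v^2 = p$.

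For necessity I would argue the contrapositive: if $p \equiv 3 \pmod 4$ divides $a^2 + b^2$, then $p \mid a$ and $p \mid b$. Indeed, if $p \nmid b$, one may invert $b$ modulo $p$ to obtain $(ab^{-1})^2 \equiv -1 \pmod p$, contradicting that $-1$ is a non-residue modulo primes $p \equiv 3 \pmod 4$ (again by Euler's criterion, since now $(p-1)/2$ is odd). Hence $p^2 \mid a^2+b^2$, and iterating after dividing by $p^2$ shows that $p$ must appear to an even power in $n$. The main obstacle is producing the actual decomposition $p = u^2+v^2$ in the Fermat case; the multiplicativity identity and the non-residue argument are essentially mechanical, but the existence for $p \equiv 1 \pmod 4$ requires a genuinely nontrivial tool such as Thue's lemma, Fermat descent, or unique factorization in $\mathbb{Z}[i]$.
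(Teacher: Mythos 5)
Your proof is correct and complete, but there is nothing in the paper to compare it against at the level of technique: the paper does not prove Theorem \ref{tst} at all, it simply refers the reader to the textbook treatment in \cite{underwood}. Your route is the standard self-contained elementary argument and is essentially the one found in such references: the Brahmagupta--Fibonacci identity $(a^2+b^2)(c^2+d^2)=(ac-bd)^2+(ad+bc)^2$ gives multiplicativity; Euler's criterion gives $x^2\equiv -1 \pmod p$ for $p\equiv 1 \pmod 4$; Thue's pigeonhole lemma converts that congruence into an actual representation $p=u^2+v^2$ via the bound $0<u^2+v^2<2p$; and for necessity, the non-residue argument shows a prime $p\equiv 3\pmod 4$ dividing $a^2+b^2$ must divide both $a$ and $b$, so one can divide by $p^2$ and descend. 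Two small points to make explicit in a write-up: in the Thue step, rule out the degenerate case by noting that $v=0$ would force $u\equiv 0\pmod p$ and hence $u=0$, so $u^2+v^2$ is genuinely positive; and the ``iterating'' in the necessity direction is cleanest phrased as induction on $n$ (if $p\equiv 3\pmod 4$ divides $n=a^2+b^2$, then $p^2\mid n$ and $n/p^2$ is again a sum of two squares, so by induction $v_p(n)$ is even). Finally, state the convention that $0$ counts as a square, since you use $m^2=m^2+0^2$; with that convention the argument is airtight, and it gains in self-containedness over the paper's bare citation.
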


\begin{proof}
    We refer the reader to \cite{underwood}.
\end{proof}

\begin{theorem}[Landau \cite{L}, 1908] \label{Landau}
Let $S(x)$ be the number of $n$ that can be represented as a sum of two squares. We have that
    \begin{equation*}
        S(x)= K\frac{x}{\sqrt{\log x}}+O\Bigg(\frac{x}{(\log x)^{3/2}}\Bigg),
    \end{equation*}
        where 
    \begin{equation*}
        K=\frac{1}{\sqrt{2}}\prod_{p \equiv \ 3 \Mod 4} \Big(1-\frac{1}{p^2}\Big)^{-1/2} \approx 0.764223653...
            \end{equation*}
    known as the Landau-Ramanujan constant.
\end{theorem}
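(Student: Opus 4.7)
The plan is to adapt the contour-integration argument of Lemma \ref{deriv} (which is itself a weighted variant of Landau's result) to the unweighted counting function $S(x)$ via a Selberg--Delange style analysis. The Dirichlet series for the indicator of $\mathcal{S}$ has a square-root branch point at $s=1$ coming from $\zeta(s)$, and the entire task is to convert this fractional singularity into the $x/\sqrt{\log x}$ main term with an error of the stated size.

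First, Theorem \ref{tst} yields the Euler-product factorization
\[
B(s):=\sum_{\ell\in\mathcal{S}}\ell^{-s}=f(s)\sqrt{\zeta(s)L(s,\chi)}\qquad(\Re(s)>1),
\]
where $\chi$ is the non-principal character modulo $4$ and $f(s)$ is the Euler product displayed in the excerpt (holomorphic for $\Re(s)>1/2$). I would then set $g(s):=f(s)\sqrt{(s-1)\zeta(s)L(s,\chi)}$, which is holomorphic near $s=1$; using the class-number evaluation $L(1,\chi)=\pi/4$,
\[
g(1)=f(1)\sqrt{L(1,\chi)}=\sqrt{2}\!\!\prod_{p\equiv 3\,(\mathrm{mod}\,4)}\!\!\big(1-p^{-2}\big)^{-1/2}\cdot\sqrt{\tfrac{\pi}{4}}=K\sqrt{\pi}.
\]

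Next, I would apply Perron's formula at $c_0=1+1/\log x$ truncated at height $T=\exp(\sqrt{\log x})$:
\[
S(x)=\frac{1}{2\pi i}\int_{c_0-iT}^{c_0+iT}\! B(s)\,\frac{x^s}{s}\,ds+O\!\Big(\tfrac{x}{(\log x)^{3/2}}\Big),
\]
and shift the contour onto the Hankel-type path $\mathcal{C}$ used in Lemma \ref{deriv}: horizontal segments from $1-c\pm i\delta$ to $1\pm i\delta$ joined by a semicircle of radius $\delta=1/\log x$ around $s=1$, with $c=c'/\log T$ chosen so that $\mathcal{C}$ lies inside the standard zero-free region of $\zeta(s)L(s,\chi)$. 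The vertical tails and the horizontal connectors together contribute $O(x^{1-c}\log T)=O(x/(\log x)^{3/2})$. On $\mathcal{C}$ I factor $B(s)x^s/s=(g(s)/s)\cdot x^s/\sqrt{s-1}$ and Taylor-expand $g(s)/s=K\sqrt{\pi}+O(|s-1|)$; the remainder integrates to $O(x/(\log x)^{3/2})$ exactly as in Lemma \ref{deriv}.

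The change of variable $s=1+u$ and the Hankel identity invoked at the end of Lemma \ref{deriv} (Theorem \ref{H} in the appendix) give
\[
\frac{1}{2\pi i}\int_{\mathcal{C}}\frac{x^s}{\sqrt{s-1}}\,ds=\frac{x}{\sqrt{\pi\log x}}+O\!\Big(\tfrac{x}{(\log x)^{3/2}}\Big),
\]
so the main term becomes $K\sqrt{\pi}\cdot x/\sqrt{\pi\log x}=Kx/\sqrt{\log x}$, matching the statement. The main obstacle is the uniform bookkeeping of three distinct error sources (Perron truncation, contour contribution inside the zero-free region, and the Taylor remainder near the branch point); the choices $T=\exp(\sqrt{\log x})$, $c=c'/\log T$, $\delta=1/\log x$ are calibrated precisely to fit all three into the $O(x/(\log x)^{3/2})$ budget, just as engineered in the proof of Lemma \ref{deriv}.
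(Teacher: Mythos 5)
Your proposal is correct in substance, but note that the paper does not actually prove this statement: Theorem \ref{Landau} is a classical result quoted from the literature, with the ``proof'' consisting of a reference to Landau and to Exercise 6.2.21 of Montgomery--Vaughan. What you have written is essentially the classical Landau/Selberg--Delange argument, and it is the unweighted analogue of the paper's own Lemma \ref{deriv}: where the paper studies $\sum_{j,\ell}\frac{1}{j}(j\ell)^m e^{-j\ell/X}$ via an inverse Mellin transform with kernel $\Gamma(s)\zeta(s-m+1)$, you study $\sum_{\ell\le x,\,\ell\in\mathcal S}1$ via truncated Perron with kernel $x^s/s$; both then pull the contour to a Hankel-type path around the branch point, expand $g$ (your $g(s)/s$) at the singularity, and evaluate the main term with Theorem \ref{H}, $\Gamma(1/2)=\sqrt\pi$. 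Your constant computation $g(1)=f(1)\sqrt{L(1,\chi)}=K\sqrt{\pi}$ (using $L(1,\chi)=\pi/4$ and $(s-1)\zeta(s)\to1$) is right, and the calibration $T=\exp(\sqrt{\log x})$, $c=c'/\log T$, $\delta=1/\log x$ does keep the Perron truncation, the shifted-contour tails, and the Taylor remainder all within $O\big(x/(\log x)^{3/2}\big)$. Two points you should make explicit to be fully rigorous, though both are standard: (i) the continuation of $B(s)=f(s)\sqrt{\zeta(s)L(s,\chi)}$ into the zero-free region up to height $T$ requires choosing a holomorphic branch of the square root there (the region cut along the real segment is simply connected and $\zeta L$ is nonvanishing, so the branch matching the Euler product for $\Re s>1$ exists), and (ii) on the shifted line you need the bounds $\zeta(s),L(s,\chi)\ll\log(|t|+2)$ in the zero-free region to control the vertical tails, which changes your $O(x^{1-c}\log T)$ to $O\big(x^{1-c}(\log T)^2\big)$ -- still far inside the error budget. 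With those caveats your route is a valid self-contained proof, whereas the paper simply cites the result.
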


\begin{proof}
    We refer the reader to Exercise 6.2.21 in Montgomery and Vaughan's \cite{MV}.
\end{proof}

\begin{theorem}[Hankel] \label{H}
    For any complex number $s$,
    \begin{equation*}
        \frac{1}{2\pi i} \int\limits_\mathcal{H} e^zz^{-s} dz= \frac{1}{\Gamma(s)},
    \end{equation*}
    where for $r>0$ we let $\mathcal{H}=\mathcal{H}(r)$ denote the Hankel contour, which consists  of a path that passes from $-\infty-ir$ to $-ir$ along a straight line, and then from $-ir$ to $ir$ along the semicircle $re^{i\theta}$, $-\frac{\pi}{2} \leq \theta \leq \frac{\pi}{2}$, and then from $ir$ to $- \infty +ir $ along a straight line . Here $z^{-s}$ is assumed to have its principal value.
\end{theorem}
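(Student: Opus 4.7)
The plan is to carry out a contour deformation onto the negative real axis and then invoke the reflection formula for the Gamma function. Both sides of the claimed identity are entire functions of $s$: the right-hand side $1/\Gamma(s)$ is classical, and the left-hand side is entire because the Hankel contour is bounded away from the origin, so $e^{z}z^{-s}$ decays exponentially as $\Re z \to -\infty$ and uniform convergence on compacta in $s$ lets one differentiate under the integral sign. Hence, by analytic continuation, it suffices to verify the identity for $s$ in any nonempty open set, and I will work in the half-plane $\Re s < 1$.

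I would then fix $s$ with $\Re s < 1$. Since $e^{z}z^{-s}$ is holomorphic on $\mathbb{C}\setminus(-\infty,0]$, Cauchy's theorem implies that the Hankel integral is independent of the parameter $r>0$, and I may let $r\to 0^+$. The contribution from the semicircle $\{re^{i\theta}:-\pi/2\le\theta\le\pi/2\}$ is bounded (up to constants depending on $s$) by $r^{1-\Re s}$ and therefore vanishes in the limit. On the lower straight segment (approached from below the cut) the principal branch gives $z^{-s}=t^{-s}e^{i\pi s}$ with $t=-z>0$, while on the upper segment (approached from above) it gives $z^{-s}=t^{-s}e^{-i\pi s}$; these are the two relevant determinations of $z^{-s}$ on either side of the branch cut.

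Taking account of orientations and using the classical formula $\Gamma(1-s)=\int_0^\infty e^{-t}t^{-s}\,dt$ (valid for $\Re s<1$), the two straight segments combine to yield
\begin{equation*}
    \int_{\mathcal{H}} e^{z}z^{-s}\,dz = \bigl(e^{i\pi s}-e^{-i\pi s}\bigr)\Gamma(1-s) = 2i\sin(\pi s)\,\Gamma(1-s).
\end{equation*}
Dividing by $2\pi i$ and applying the reflection formula $\Gamma(s)\Gamma(1-s)=\pi/\sin(\pi s)$ produces $1/\Gamma(s)$, as claimed; the identity then extends to all $s\in\mathbb{C}$ by analytic continuation. The main (mild) obstacle is the bookkeeping of orientations and branch choices on the two straight segments; once that is set up correctly, the semicircle estimate and the final algebraic manipulation are routine.
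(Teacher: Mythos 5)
Your argument is correct and complete in outline: deforming to small $r$, discarding the semicircle (which is $O(r^{1-\Re s})$ for $\Re s<1$), reading off the principal-branch values $t^{-s}e^{i\pi s}$ below the cut and $t^{-s}e^{-i\pi s}$ above it with the correct orientations, and combining with Euler's integral and the reflection formula to get $\frac{1}{2\pi i}\bigl(e^{i\pi s}-e^{-i\pi s}\bigr)\Gamma(1-s)=1/\Gamma(s)$, then extending to all $s$ by noting both sides are entire. Note, however, that the paper does not prove this statement at all: it simply cites Theorem C.3 in Montgomery and Vaughan, so there is no in-paper argument to compare against. What your write-up adds is a self-contained derivation by the classical route (collapse onto the branch cut plus the reflection formula); the only points you should make sure are actually justified in a final version are the $r$-independence of the integral (Cauchy's theorem applied to the region between two Hankel contours, closed off at $\Re z=-R$ with the closing segments vanishing as $R\to\infty$ by the decay of $e^{z}$) and the passage to the limit $r\to0^{+}$ on the two rays (dominated convergence, using $\Re s<1$ for integrability near $0$), both of which you gesture at and which are routine.
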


\begin{proof}
    We refer the reader to Theorem C.3 in Montgomery and Vaughan's \cite{MV}.
\end{proof}

\end{document}